\newtheorem{theorem}{Theorem}
\newtheorem{proposition}[theorem]{Proposition}
\newtheorem{lemma}[theorem]{Lemma}
\newtheorem{question}[theorem]{Question}
\theoremstyle{definition}
\newtheorem{definition}[theorem]{Definition}
\theoremstyle{remark}
\newtheorem{remark}[theorem]{Remark}
\renewcommand{\rm}[1]{\mathrm{#1}}
\newcommand{\bb}[1]{\mathbb{#1}}
\newcommand{\fk}[1]{\mathfrak{#1}}
\newcommand{\bd}[1]{\mathbf{#1}}
\newcommand{\cl}[1]{\mathcal{#1}}
\renewcommand{\a}{\alpha}
\renewcommand{\b}{\beta}
\newcommand{\g}{\gamma}
\renewcommand{\d}{\delta}
\newcommand{\e}{\epsilon}
\newcommand{\om}{\omega}
\newcommand{\Om}{\Omega}
\newcommand{\w}{\wedge}
\newcommand{\tr}{\mathrm{tr}}
\newcommand{\pt}[2]{\frac{\partial #1}{\partial #2}}
\newcommand{\vol}{\mathrm{vol}}
\newcommand{\inv}{\mathrm{inv}}
\begin{document}

\title{$SO(3)$-invariant $G_2$-cobordisms}
\author{Ryohei Chihara}
\address{Graduate School of Mathematical Sciences, University of Tokyo, 3-8-1 Komaba, Meguro-ku, Tokyo 153-8914, Japan}
\email{rchihara@ms.u-tokyo.ac.jp}
\subjclass[2010]{53C15, 53C26, 53C38}
\keywords{Almost complex structures, $G_2$-structures, $SO(3)$-symmetry, stable forms, symplectic structures}

\maketitle

\begin{abstract}
We study a bordism relation for stable 3-forms on a 6-manifold, which is a binary relation on the set of closed $SL(3;\bb{C})$-structures on a 6-manifold via closed $G_2$-structures. Under $SO(3)$-symmetry and a co-associative condition the relation is reduced to  a relation for geometric structures on a 3-manifold. Under these conditions we prove that the bordism relation is irreflexive and that the relation induces a more rigid one.
\end{abstract}

\section{Introduction}
\label{sec:intro}

The concept of $G_2$-cobordisms between 6-manifolds with $SL(3;\bb{C})$-structures, which was introduced by S. Donaldson (\cite{D}, Section 4), is significant in the context of boundary value problems for metrics with holonomy $G_2$. The bordism relation is defined as follows. Let $X$ be a spin 6-manifold, and let $\psi_1,\psi_2$  be closed definite 3-forms that give $SL(3;\bb{C})$-structures on $X$. The forms $\psi_1$ and $\psi_2$ are {\it $G_2$-cobordant} if there exists a closed definite 3-form $\phi$ defining a $G_2$-structure on $X\times[t_1,t_2]$ compatible with the orientation and satisfying $\phi|_{X\times\{t_i\}}=\psi_i$ for $i=1,2$. This gives a binary relation $\psi_1 \prec \psi_2$ on the set $\cl{C}(X)$ of closed definite 3-forms on $X$.

Moreover we define a more rigid  relation $\psi_1\ll\psi_2$ on $\cl{C}(X)$ by the existence of such a closed $G_2$-structure $\phi$ that also induces a Riemannian metric on $X\times[t_1,t_2]$ such that the product is orthogonal.

 Although Donaldson pointed out that the relation $\prec$ is transitive (\cite{D}, Proposition 2), many basic properties of the relations are still open. In this paper, we are interested in the following questions:
 
 \begin{question}\label{conj:1} Let $X$ be a closed spin 6-manifold. When is the binary relation $\prec$ on $\cl{C}$(X) irreflexive? Here a binary relation is {\it irreflexive} if it satisfies $\psi\nprec\psi$ for all $\psi\in \cl{C}(X)$.
\end{question}

Note that if $X$ admits a Calabi-Yau structure $(\om, \psi)$ then we can easily see that the 3-form satisfies $\psi \prec \psi$ using the symplectic form $\om$. So the non-existence of Calabi-Yau structures is a necessary condition for the relation to be irreflexive.

\begin{question}\label{conj:2}
When does the relation $\prec$ induce the above more rigid relation $\ll$?
\end{question}

For a Calabi-Yau structure $(\om,\psi)$ we have $\psi \ll \psi$. We expect that this question is related to sub-Riemannian geometry of diffeomorphism group action to $\cl{C}(X)$.\\

In this paper, we consider the relation under $SO(3)$-symmetry and a co-associative condition, and prove several results related to the above questions. More precisely, we study $G_2$-cobordisms defined by $SO(3)$-invariant co-associative fibrations $M\times SO(3) \times[t_1,t_2]$ over a base 3-manifold $M$. These give a binary relation on the set of $SO(3)$-invariant closed definite 3-forms on $M\times SO(3)$ vanishing along the fibers $SO(3)$. We describe the relation explicitly (Lemma \ref{lem:der} and Theorem \ref{thm:1}), and prove that the relation is irreflexive (Theorem \ref{thm:2}). Moreover, we prove that under these conditions the relation induces the more rigid one (Theorem \ref{thm:3}). 

\section{Preliminaries}
\label{sec:pre}

In this section we recall the notions of definite 3-forms and $G_2$-cobordisms. The former was studied in detail in \cite{H, H2} and the latter introduced in \cite{D}. 

\subsection{Definite 3-forms in dimension 6 and 7}

Let $V$ be a real vector space of dimension 6 and $V^*$ its dual. Take a basis $\{v^1,w^1,v^2,w^2,v^3,w^3\}$ of $V^*$.  Let us set a normal form
\begin{align*}
\psi_0 &=\rm{Im}\{(v^1+\sqrt{-1}w^1)\w(v^2+\sqrt{-1}w^2)\w(v^3+\sqrt{-1}w^3)\}\\
&=-w^{123}+w^1v^{23}+w^2v^{31}+w^3v^{12}
\end{align*}
where e.g., $v^{12}=v^1\w v^2$. A 3-form 
$\psi$ on $V$ is called {\it definite} if $\psi$ is contained in the orbit $GL(V)\cdot\psi_0 \subset \w^3 V^*$. This condition is equivalent to saying that $\iota(v)\psi \in \w^2V^*$ has rank 4 for any non-zero $v\in V$, where $\iota$ denotes inner product (see \cite{D}, Section 2.1). The definiteness of a 3-form is an open condition. 

Let $X$ be an oriented 6-manifold. If $X$ has an $SL(3;\bb{C})$-structure, then  $X$ is spin. A 3-form $\psi\in\Om^3(X)$ is called {\it definite}
 if $\psi_{x}\in \w^3T^*_xX$ is definite at each point $x\in X$. We know that the group $\{g\in GL^{+}(V)\mid g^{*}\psi_0=\psi_0 \}$ coincides with $SL(3; \bb{C})\subset GL(6;\bb{R})$. Thus a definite 3-form $\psi$ naturally gives an $SL(3;\bb{C})$-structure on $X$. In particular, $\psi$ gives an almost complex structure $J_{\psi}$ on $X$.
 
 Let $W$ be a 7-dimensional real vector space and $W^*$ its dual. Take a basis $\{v^0,v^1,w^1,v^2,w^2,v^3,w^3\}$, and set a normal form
 \begin{align*}
     \phi_0 &= (v^1w^1+v^2w^2+v^3w^3)v^0+\psi_0 \\
  &= -w^{123}+w^1(v^{01}+v^{23})+w^2(v^{02}+v^{31})+w^3(v^{03}+v^{12}).
 \end{align*}
 A 3-form $\phi$ on $W$ is called {\it definite} if $\phi$ is contained in the orbit $GL(W)\cdot\phi_0 \subset \w^3W^*$. This condition is equivalent to saying that $\iota(w)\phi$ has rank 6 for any non-zero $w\in W$ (see \cite{D}, Section 2.1). Moreover, the restriction of a definite 3-form to any 6-dimensional subspace is also definite. 
 
 Let $Y$ be a 7-manifold. If $Y$ has a $G_2$-structure, then $Y$ is spin. A 3-form $\phi \in \Om^3(Y)$ is called  {\it definite} if $\phi_y\in \w^3T^*_yY$ is definite for each $y\in Y$. The isotropy group of $\phi_0$ is known to coincide with $G_2 \subset SO(7)$. Thus a definite 3-form $\phi$ gives a $G_2$-structure, which induces an orientation and a Riemannian metric on $Y$. Let us denote by $\rm{vol}(\phi)$ the volume form on $Y$ induced by $\phi$. See e.g., \cite{D, H, H2, J} for more details on $SL(3;\bb{C})$- and $G_2$-structures.
 
 \subsection{$G_2$-cobordisms}
 
Let $X$ be a spin 6-manifold. Define 
\begin{align*}
    \cl{C}(X):=\{\psi \in \Om^3(X) \mid \psi \text{ is closed and definite}\}.
\end{align*}
This set is a subset of the $SL(3;\bb{C})$-structures compatible with the orientation on $X$. Let us define a binary relation on $\cl{C}(X)$ using closed $G_2$-structures on $X\times[t_1,t_2]$, where $t_1 < t_2$.

\begin{definition}\label{def:binary}
Let $\psi_1$, $\psi_2\in\cl{C}(X)$. Then 
$\psi_1 \prec \psi_2$ if there exists a closed definite 3-form $\phi$ on $X\times[t_1,t_2]$ satisfying the conditions: $\vol(\phi)>0$ for the product orientation, $\phi|_{X\times\{t_1\}}=\psi_1$ and $\phi|_{X\times\{t_2\}}=\psi_2$.
\end{definition}

This binary relation is linked to non-degenerate 2-forms on $X$ by the following elementary proposition.

\begin{proposition}[\cite{D}, the second paragraph in Section 4]\label{prop:rewrite} 
Let $\psi_1,\psi_2\in \cl{C}(X).$
Then $\psi_1\prec\psi_2$ if and only if there exist $\om_t\in\Om^2(X)$ and $\psi_t\in\Om^3(X)$ parameterized by $t\in [t_1,t_2]$ satisfying the following conditions:
\begin{enumerate}
\item $\psi_{t_1}=\psi_1$, $\psi_{t_2}=\psi_2$, and $\psi_t$ is definite for each $t$,

\item $\partial \psi_t/\partial t = d\om_t$ for each $t$, 

\item the (1,1) part of $\om_t$ is positive with respect to the almost complex structure on $X$ induced by each $\psi_t$.
\end{enumerate}
\end{proposition}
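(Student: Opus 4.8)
The plan is to pass from the $7$-dimensional form $\phi$ to a $t$-family of data on $X$ by using the product structure. Writing $t$ for the coordinate on $[t_1,t_2]$ and $d_X$ for the exterior derivative in the $X$-directions, every $3$-form on $X\times[t_1,t_2]$ can be written uniquely as
\[
\phi=\psi_t+dt\w\om_t,\qquad \psi_t\in\Om^3(X),\ \om_t\in\Om^2(X),
\]
where $\psi_t,\om_t$ depend on $t$. First I would record the two elementary consequences of this splitting. Restricting to a slice kills the $dt$-component, so $\phi|_{X\times\{t_i\}}=\psi_{t_i}$ and the boundary conditions of Definition \ref{def:binary} are literally the boundary conditions in (1). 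Computing $d\phi=d_X\psi_t+dt\w\bigl(\pt{\psi_t}{t}-d_X\om_t\bigr)$ and separating the parts with and without $dt$, closedness of $\phi$ is equivalent to $d_X\psi_t=0$ for all $t$ together with $\pt{\psi_t}{t}=d_X\om_t$, which is exactly (2). Note that, given (1) and (2), the identity $\pt{(d_X\psi_t)}{t}=d_Xd_X\om_t=0$ shows that $d_X\psi_t$ is independent of $t$; since $\psi_{t_1}=\psi_1$ is closed, every $\psi_t$ is then automatically closed, so the closedness of each $\psi_t$ need not be imposed separately.

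The heart of the argument is the pointwise linear-algebra statement: for a $3$-form $\psi$ on a $6$-dimensional $V$ and a transverse $1$-form $dt$, the form $\phi=\psi+dt\w\om$ lies in the orbit $GL(W)\cdot\phi_0$ with $\vol(\phi)>0$ if and only if $\psi$ is definite and the $(1,1)$-part $\om^{1,1}$ of $\om$ with respect to $J_\psi$ is positive. One direction is immediate from a fact already recalled: if $\phi$ is definite, then its restriction $\psi=\phi|_{V}$ to the $6$-dimensional slice is definite. For the role of $\om$ I would first show that the $(2,0)+(0,2)$-part of $\om$ is irrelevant. Consider the shear $g\in GL(W)$ fixing $V$ and sending the vector dual to $dt$ to itself plus a vector $Y\in V$; a direct computation gives $g^*\phi=\psi+dt\w\bigl(\om+\iota(Y)\psi\bigr)$, i.e.\ $g$ leaves $\psi$ unchanged and modifies $\om$ by $\iota(Y)\psi$. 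Writing $\psi=\rm{Im}\,\Om_\psi$ for the induced complex volume form, $\iota(Y)\psi$ runs through all of $\L^{2,0}\oplus\L^{0,2}$ as $Y$ runs through $V$, because contraction with $\Om_\psi$ identifies $V^{1,0}$ with $\L^{2,0}$. Since the orbit $GL(W)\cdot\phi_0$ is preserved by $g$, I may therefore assume $\om=\om^{1,1}$.

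It then remains to treat the $(1,1)$ case, where I would use that the stabiliser $SL(3;\bb{C})$ of $\psi_0$ acts transitively on positive $(1,1)$-forms up to positive scale (a dimension count: $SL(3;\bb{C})/SU(3)$ has dimension $8$, as does the space of Hermitian metrics modulo scale), while rescaling $dt$ absorbs the scale. Hence any $\phi=\psi+dt\w\om^{1,1}$ with $\om^{1,1}$ positive can be carried to the normal form $\phi_0=\psi_0+v^0\w\om_0$ by an element of $GL(W)$, proving definiteness and, after checking the induced orientation, $\vol(\phi)>0$; conversely, an indefinite or degenerate $\om^{1,1}$ lands $\phi$ outside the compact $G_2$-orbit (in the split-form orbit, or off the stable locus), so positivity is genuinely necessary. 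Combining this pointwise equivalence, applied at every point of $X\times[t_1,t_2]$, with the computation of the first paragraph yields both implications. The hard part will be the linear-algebra step, and specifically the verification that the $(2,0)+(0,2)$-part of $\om$ can always be absorbed by a shear; everything else is bookkeeping with the product splitting.
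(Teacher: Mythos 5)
Your proposal is correct and follows the same route as the paper: decompose $\phi$ on the product as $\om_t\w dt+\psi_t$, match the boundary and closedness conditions slice by slice, and reduce the definiteness of $\phi$ to the definiteness of $\psi_t$ together with positivity of the $(1,1)$ part of $\om_t$. The only difference is one of detail: the paper's proof simply asserts the pointwise equivalence (deferring it to Donaldson, as the attribution in the statement indicates), whereas you actually prove it via the shear absorbing the $(2,0)+(0,2)$ part of $\om$ and the transitivity of $SL(3;\bb{C})$ on positive Hermitian forms up to scale --- both steps check out.
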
 

\begin{proof}
Suppose that $\psi_1\prec\psi_2$. Then we have a closed definite 3-form $\phi$ on $X\times[t_1,t_2]$ satisfying the conditions in Definition \ref{def:binary}. Setting $\om_t:=\iota(\partial/\partial t)\phi|_{X\times\{t\}}$ and $\psi_t:=\phi|_{X\times\{t\}}$, we obtain the desired forms. Conversely, suppose that we have $\om_t$ and $\psi_t$ satisfying the conditions in Proposition \ref{prop:rewrite}. Then $\phi=\om_t\w dt+\psi_t$ is the desired closed definite 3-form on $X\times[t_1,t_2]$, which gives $\psi_1 \prec \psi_2$.
\end{proof}
 
 \begin{remark}
  By Proposition \ref{prop:rewrite}, we can easily see that $\psi \prec \psi$ by some one-parameter family $(\om_t,\psi_t)$ with $\psi_t \equiv \psi$ if and only if $\psi$ has a symplectic form $\om$ that has the positive $(1,1)$ part with respect to $J_{\psi}$. Moreover we can give a non-constant path giving $\psi \prec \psi$ by perturbing
 $\om_t$ to $\om_t +\a_t$. Here $\a_t$ is a family of small 2-forms satisfying $\int_{[t_1,t_2]}\a_tdt=0$ pointwise.
\end{remark}

Let us define a more rigid version of the above relation.
 
\begin{definition}\label{def:stronger}
Let $\psi_1,\psi_2\in\cl{C}(X)$. Define  $\psi_1\ll \psi_2$ by that  there exists a closed definite 3-form $\phi$ on $X\times[t_1,t_2]$ satisfying the conditions in Definition \ref{def:binary} and also $\iota(\partial/\partial t)\phi\w\phi|_{X\times\{t\}}=0$ for each $t$.
\end{definition}

The last condition is equivalent to saying that the 2-form  $\iota(\partial/\partial t)\phi|_{X\times\{t\}}$ is a $(1,1)$ form on $X$ with respect to the almost complex structure induced by each $\phi|_{X\times\{t\}}$. This condition is also equivalent to that the product $X \times [t_1,t_2]$ is orthogonal for the metric defined by $\phi$.

\section{$SO(3)$-invariant structures}
\label{sec:main}

In this section we prove several results related to Questions \ref{conj:1} and \ref{conj:2} under $SO(3)$-symmetry and a co-associative condition.

\subsection{Set-up}

Let $M$ be an oriented 3-manifold. Fix a basis $\{X_1,X_2,X_3\}$ of $\fk{so}(3)$ satisfying $[X_i,X_j]=\e_{ijk}X_k$ for $i,j=1,2,3,$ where $\e_{ijk}$ is the 3-rd order Levi-Civita symbol. An orientation on $SO(3)$ is given by $X_1\w X_2\w X_3$, and that on $M\times SO(3)$ by direct product.
Let us define  $\psi\in \cl{C}^{\inv}(M)$ by the following conditions:
\begin{enumerate}
    \item $\psi\in\cl{C}(M\times SO(3))$,
    \item $\psi$ is $SO(3)$-invariant under the right action,
    \item $\psi$ vanishes along each fiber $SO(3)$.
\end{enumerate}
Note that $\cl{C}(M) \subset \cl{C}(M \times SO(3))$. Refinements of the relations $\prec$ and $\ll$ in Definition \ref{def:binary} and \ref{def:stronger} are defined as follows.

\begin{definition}\label{def:invbinary}
Let $\psi_1,\psi_2\in\cl{C}^{\inv}(M)$. Then $\psi_1\prec^{\inv}\psi_2$ if there exists a closed definite 3-form $\phi$ on $M\times SO(3) \times[t_1,t_2]$ satisfying the conditions in Definition \ref{def:binary} and also that  $\phi$ is $SO(3)$-invariant and vanishes along each fiber  $SO(3)\times[t_1,t_2]$.
\end{definition}

In Definition \ref{def:invbinary} each fiber $SO(3)\times[t_1,t_2]$ is an almost co-associative submanifold of the 7-manifold $M\times SO(3)\times[t_1,t_2]$ with the $G_2$-structure $\phi$.

\begin{definition}
Let $\psi_1,\psi_2\in\cl{C}^{\inv}(M)$. Then $\psi_1\ll^{\inv}\psi_2$ if there exists a closed definite 3-form $\phi$ satisfying the conditions in Definition \ref{def:invbinary} and also that  $\iota(\partial/\partial t)\phi\w\phi|_{M\times SO(3) \times\{t\}}=0$ for each $t$.
\end{definition}

Clearly $\psi_1\prec^{\inv}\psi_2$ implies $\psi_1\prec\psi_2$, and $\ll^{\inv}$ implies $\ll$.

\subsection{Lemmas}

Let us prove lemmas used in the proofs of main results. Let $C_{+}^{\infty}(M)$ and $\cl{A}$ be the set of positive functions on $M$ and the set of connection 1-forms on the trivial bundle $M\times  SO(3)$, respectively. A tensorial (or horizontal) $\fk{so}(3)$-valued 1-form $e$ on $M\times  SO(3)$ is called a {\it solder 1-form} if $e=e^iX_i$ satisfies $e^{123} \neq 0$ at each $u \in M\times  SO(3)$ for the basis $\{X_1,X_2,X_3\} \subset \fk{so}(3)$. By this basis, we write $a=a^iX_i \in \cl{A}$ and often follow Einstein's convention.

\begin{lemma}\label{lem:dec} 
Let $\psi$ be an $SO(3)$-invariant definite 3-form on $M\times SO(3)$ vanishing along each fiber $SO(3)$. Then there exists a unique triple of  $f\in C^{\infty}_{+}(M)$, $a\in \cl{A}$, and a solder 1-form $e$ such that 
\begin{align*}
    \psi=-fe^{123}+e^1a^{23}+e^2a^{31}+e^3a^{12}.
\end{align*}
\end{lemma}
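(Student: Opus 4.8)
The plan is to work on the trivial bundle $\pi\colon M\times SO(3)\to M$ and to split $\psi$ according to the number of vertical (fiber-tangent) arguments. Let $\theta=\theta^iX_i$ be the pulled-back Maurer--Cartan form of the $SO(3)$-factor, so that $\theta^1,\theta^2,\theta^3$ annihilate the horizontal directions of the product and, under the right action, rotate as the standard vector representation, $R_g^*\theta=\mathrm{Ad}_{g^{-1}}\theta$. Writing $H^*=\pi^*\Om^*(M)$ for the basic forms, I would record the decomposition of $3$-forms on $M\times SO(3)$ into summands of vertical degree $0,1,2,3$, spanned respectively by basic $3$-forms, by $\theta^i$ wedged with basic $2$-forms, by $\theta^j\w\theta^k$ wedged with basic $1$-forms, and by $\theta^{123}$. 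Condition (3) says exactly that the top (degree-$3$) summand $\langle\theta^{123}\rangle$ is absent, so $\psi=\psi_{[0]}+\psi_{[1]}+\psi_{[2]}$ with vertical degrees $0,1,2$.

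First I would extract $e$ from the vertical-degree-$2$ part. Under the isomorphism $\w^2\fk{so}(3)^*\cong\fk{so}(3)^*$ sending $\theta^j\w\theta^k\mapsto\theta^i$ cyclically, the component $\psi_{[2]}$ is equivalent to a single horizontal $\fk{so}(3)$-valued $1$-form, which I name $e=e^iX_i$; explicitly $\psi_{[2]}=e^1\w\theta^{23}+e^2\w\theta^{31}+e^3\w\theta^{12}$. Invariance of $\psi$ together with the vector transformation law of the $\theta^j\w\theta^k$ forces the $e^i$ to transform as a vector, so $e$ is tensorial. With $e$ fixed, I would next solve for the connection: positing $a=\theta+A$ with $A$ horizontal and $\fk{so}(3)$-valued, the identity $\psi_{[1]}=e^1\w(\theta^2\w A^3+A^2\w\theta^3)+\cdots$ (cyclically in $1,2,3$) is a linear system whose unique solution $A$ exists because $e$ will be a horizontal coframe; the equivariance of $A$, hence that $a\in\cl{A}$ is a genuine connection reproducing the fundamental vector fields, again follows from invariance. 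Finally $f$ is read off from $-f\,e^{123}=\psi_{[0]}-(e^1\w A^2\w A^3+\cdots)$, the left-hand side spanning the line $\w^3H^*$. As each step determines its output uniquely from $\psi$ and the data already found, the triple $(f,e,a)$ is unique.

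The main obstacle is to show that definiteness of $\psi$ forces $e$ to be a solder form, i.e. $e^{123}\neq0$ pointwise, and forces $f>0$; granting this, the decomposition is exactly of the asserted type. The converse is easy: if $e^{123}\neq0$ then $\{e^1,e^2,e^3,a^1,a^2,a^3\}$ is a coframe, since the $e^i$ are horizontal and independent while the $a^i=\theta^i+A^i$ have independent vertical parts, and the real rescaling $e^i\mapsto f^{-1/3}e^i$, $a^i\mapsto f^{1/6}a^i$ (legitimate because $f>0$) carries $\psi$ to the normal form $\psi_0$ of Section \ref{sec:pre} under $w^i\leftrightarrow e^i$, $v^i\leftrightarrow a^i$, so $\psi$ is definite. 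For the forward direction I would invoke the rank-$4$ criterion: $\psi$ is definite iff $\iota(v)\psi\w\iota(v)\psi\neq0$ for every nonzero $v$. Evaluating on the vertical generators gives $\iota(\hat X_i)\psi=e^k\w a^j-e^j\w a^k$ for $(i,j,k)$ cyclic, whose square is proportional to $e^j\w e^k\w a^j\w a^k$; because the $a^j$ carry the independent vertical parts $\theta^j$, this is nonzero precisely when $e^j,e^k$ are linearly independent, so definiteness already forces every pair among the $e^i$ to be independent. Promoting pairwise independence to $e^{123}\neq0$, and pinning down the sign of $f$, is the technical heart: it requires testing horizontal and mixed directions and comparing the orientation induced by $J_{\psi}$ with the fixed product orientation, a finite case analysis against $\psi_0$. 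The pointwise conclusions then globalize over $M$ at once, since every construction above is natural and $SO(3)$-equivariant, yielding $f\in C^{\infty}_{+}(M)$, $a\in\cl{A}$, and the solder $1$-form $e$.
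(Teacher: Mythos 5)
Your route is genuinely different from the paper's. The paper produces an auxiliary $SO(3)$-invariant positive $(1,1)$-form $\sum_i V^i\w W^i$ out of invariant vertical vector fields and $J_\psi$, and then quotes an external structure theorem (\cite{C}, Theorem 3.5) to obtain the triple $(f,a,e)$; you instead build the triple by hand from the decomposition of $\psi$ by vertical degree. The formal part of your construction is correct and gives uniqueness for free: $\psi_{[2]}$ determines $e$, invariance of $\psi$ plus the transformation law of $\theta$ makes $e$ tensorial and $a=\theta+A$ a connection, and once $e^1,e^2,e^3$ is a horizontal coframe the system $\beta_i=e^j\w A^k-e^k\w A^j$ (cyclic) read off from $\psi_{[1]}$ determines $A$ uniquely (in the $e$-basis the off-diagonal entries of $A$ appear directly and the diagonal ones satisfy an invertible $3\times 3$ system), after which $f$ is pinned down by $\psi_{[0]}$. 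This is a self-contained alternative to the citation, provided it is finished.

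It is not finished, and the missing step is exactly the one carrying all the content of the lemma: that definiteness forces $e^{123}\neq0$ and $f>0$. You flag this as "the technical heart" but give no argument that would succeed; in particular, pairwise independence of the $e^i$ (which is all your vertical tests yield) does not imply $e^{123}\neq0$ on a $3$-dimensional horizontal space, and nothing in the proposal addresses the sign of $f$ beyond an appeal to "a finite case analysis." Both claims are true and can be closed cheaply inside your framework, so the approach does not fail, but as written there is a gap. For $e^{123}\neq0$: if $e^{123}$ vanished at a point, the six covectors $e^1,e^2,e^3,a^1,a^2,a^3$ would span at most a $5$-dimensional subspace of the cotangent space, so some $u\neq0$ would annihilate all of them, giving $\iota(u)\psi=0$ and violating the rank-$4$ criterion. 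For the sign of $f$: with $\{e^i,a^i\}$ a coframe and dual frame $\{E_i,F_i\}$, if $f=0$ then $\iota(E_1)\psi=a^{23}$ has rank $2$; rescaling reduces the remaining cases to $f=\pm1$, and for $f=-1$ one checks that $\iota(E_1+F_1)\psi=(e^{23}+a^{23})+(e^2\w a^3-e^3\w a^2)$ has wedge square $2e^{23}\w a^{23}-2e^{23}\w a^{23}=0$, again contradicting definiteness, while for $f=+1$ one lands on the normal form $\psi_0$. Supplying these two computations would make your proof complete and independent of the paper's external reference.
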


\begin{proof}
We can easily see that there exists an $SO(3)$-invariant positive $(1,1)$ form $\om$ on $M\times SO(3)$ vanishing along each fiber. In fact, let $V_1,V_2,V_3$ be $SO(3)$-invariant linear-independent vector fields along the fibers, and let $J_{\psi}$ be the almost complex structure induced by $\psi$. Put $W_i=J_{\psi}V_i$ for $i=1,2,3.$ Let $\{V^i,W^i\mid i=1,2,3\}$ be the dual 1-forms of $\{V_i,W_i\mid i=1,2,3\}$. Then we can see that $\sum_{i=1}^3V^i\w W^i$ is the desired 2-form. Hence, by (\cite{C}, Theorem 3.5), there exists a triple $(f,a,e)$ satisfying the desired conditions. We can easily see the uniqueness.
\end{proof}

\begin{remark} We remark that $d\psi = 0$ if and only if $a$ is the Levi-Civita connection satisfying $de + [a\w e]=0$ and moreover that such a closed $SO(3)$-invariant 3-form is thus determined by $(f, e)$, and up to action of the gauge group by $(f,\g)$ for $\g$ the metric on $M$ defined by $e$. (See \cite{C2}, Subsection 1.2 for details.)
\end{remark}

\begin{lemma}\label{lem:dec2}
Let $\psi$ be a 3-form as in Lemma \ref{lem:dec}, and $\om$ an $SO(3)$-invariant 2-form  on $M\times SO(3)$ vanishing along all fibers and horizontal planes of the connection $a$. Then there exists a unique $SO(3)$-equivariant $M(3;\bb{R})$-valued function $K_{ij}$ on $M\times SO(3)$ such that 
\begin{align*}
    \om= \sum_{i,j=1,2,3}K_{ij}a^i\w e^j.
\end{align*}
Here $C \in SO(3)$ acts on $D \in M(3;\bb{R})$ by $CDC^{-1}$.
\end{lemma}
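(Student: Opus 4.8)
The plan is to work with the global coframe on $M\times SO(3)$ furnished by Lemma \ref{lem:dec}. Since the solder 1-form $e$ is tensorial it annihilates the vectors tangent to the fibers, while the connection 1-forms $a^i$ annihilate the horizontal distribution of $a$; together with $e^{123}\neq 0$ this shows that $\{a^1,a^2,a^3,e^1,e^2,e^3\}$ is a coframe at every point of $M\times SO(3)$, adapted to the splitting of $T^*(M\times SO(3))$ into the vertical part $\langle a^i\rangle$ and the horizontal part $\langle e^j\rangle$. Everything then reduces to pointwise linear algebra in this coframe together with a transformation-law computation.

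First I would decompose $\om$ in the induced basis of $\w^2 T^*(M\times SO(3))$, namely $\{a^i\w a^j\}_{i<j}$, $\{a^i\w e^j\}$, and $\{e^i\w e^j\}_{i<j}$, with uniquely determined coefficient functions. Linear independence of these fifteen 2-forms yields the uniqueness assertion immediately. For existence I would feed in the two vanishing hypotheses. Evaluating $\om$ on a pair of vertical vectors, every term containing an $e$-factor drops out, so only the $a^i\w a^j$ contribute; since the $a^i$ restrict to a basis of the vertical cotangent space, vanishing along the fibers forces those coefficients to be zero. Symmetrically, evaluating on a pair of horizontal vectors leaves only the $e^i\w e^j$, and as $e^{123}\neq 0$ the vanishing along the horizontal planes of $a$ kills those. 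Hence $\om=\sum_{i,j}K_{ij}\,a^i\w e^j$ for unique functions $K_{ij}$, which I assemble into an $M(3;\bb{R})$-valued function $K$ on $M\times SO(3)$.

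It remains to turn the $SO(3)$-invariance of $\om$ into the asserted equivariance of $K$. Here I would invoke the transformation laws under the right action $R_C$: the connection satisfies $R_C^*a=\mathrm{Ad}_{C^{-1}}a$, and the tensorial solder form of adjoint type satisfies $R_C^*e=\mathrm{Ad}_{C^{-1}}e$. In the basis $\{X_i\}$, using that the adjoint representation of $SO(3)$ is the standard one, these read $R_C^*a^i=(C^{-1})_{ik}\,a^k$ and $R_C^*e^j=(C^{-1})_{jl}\,e^l$. Pulling $\om$ back, imposing $R_C^*\om=\om$, and matching coefficients of the still-independent $a^k\w e^l$, I obtain $K_{kl}=(C^{-1})_{ik}(R_C^*K_{ij})(C^{-1})_{jl}$, which rearranges to the matrix identity $R_C^*K=C^{-1}KC$; this is exactly equivariance of $K$ for the conjugation action $C\cdot D=CDC^{-1}$.

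The pointwise decomposition of the first two paragraphs is routine. The step requiring care is the last one: one must use the adjoint (not the naive) transformation of both $a$ and $e$ and keep precise track of $C$ versus $C^{-1}$ and of the left/right conventions, so that the two factors of $C^{-1}$ recombine — via $(C^{-1})^{T}=C$ for $C\in SO(3)$ — into the genuine conjugation $C^{-1}KC$ rather than some other twisting. Once the conventions are fixed the computation is short, and this is where I expect the only real subtlety to lie.
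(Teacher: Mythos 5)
Your proposal is correct and follows essentially the same route as the paper: the paper simply defines $K_{ij}$ by $\iota(X_i^*)\om=\sum_j K_{ij}e^j$ and asserts the conclusion, which is the same pointwise linear algebra in the coframe $\{a^i,e^j\}$ that you carry out explicitly. The only difference is that you additionally spell out the equivariance check via the transformation laws $R_C^*a=\mathrm{Ad}_{C^{-1}}a$ and $R_C^*e=\mathrm{Ad}_{C^{-1}}e$, which the paper leaves implicit; your computation of it is correct.
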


\begin{proof}
Using the basis $\{X_1,X_2,X_3\}$ of $\fk{so}(3)$, let us define $K_{ij}$ by $\iota(X^*_i)\om = \sum_{j=1}^3K_{ij}e^j$ for $i=1,2,3,$ where $A^*$ denotes the infinitesimal vector field of $A\in \fk{so}(3)$. Then $K_{ij}$ satisfies the conditions in Lemma \ref{lem:dec2}.
\end{proof}

\begin{remark}
We can easily see that $\om$ is (1,1) with respect to the almost complex structure induced by $\psi$ if and only if $K_{ij}$ is symmetric.
\end{remark}

Let $(\om_t,\psi_t)$ be a one-parameter family of 2- and 3-forms satisfying the conditions in Lemma \ref{lem:dec} and \ref{lem:dec2} and parameterized by an  interval $[t_1,t_2]$. Then we have the family $(f_t,a_t,e_t,(K_{ij})_t)$ corresponding to $(\om_t,\psi_t)$ by Lemma \ref{lem:dec} and \ref{lem:dec2}. The solder form $e$ allows us to identify $a$ with a connection on $M$, and hence make sense of the condition that $a$ is the Levi-Civita connection. We write the curvature form $da + (1/2)[a\w a]$ as $G_{ij}\hat{e}^iX_j$, where $\hat{e}^i =(1/2)\e_{ijk}e^{ij}$.

\begin{lemma}\label{lem:der} 
The 3-form $\phi=\om_t\w dt+\psi_t$ on $M\times SO(3)\times[t_1,t_2]$ is closed, definite and $\vol(\phi)>0$ if and only if 
$(f_t,a_t,e_t,(K_{ij})_t)$ satisfies the following conditions:
\begin{enumerate}
    \item $(1/2)\{(K_{ij})_t+(K_{ji})_t\}$ is positive-definite for each $t$,
    \item $a_t$ is the Levi-Civita connection for each $e_t$,
    \item $\partial e^i / \partial t= \sum_{j=1}^3 (K_{ij})_te_t^j$ for $i=1,2,3$ and for each $t$,
    \item $\partial f / \partial t=-f_t\tr{K_t}-\tr {(K_tG_t)}$ for each $t$, where $G_t$ is the curvature of $a_t$.
\end{enumerate}
\end{lemma}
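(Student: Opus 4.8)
The plan is to treat the three requirements on $\phi=\om_t\w dt+\psi_t$ in turn. Write $d$ for the exterior derivative on $M\times SO(3)$ and $\tilde d$ for the one on the product $M\times SO(3)\times[t_1,t_2]$. A direct computation gives
\[
\tilde d\phi=d\psi_t+\Bigl(d\om_t-\frac{\del\psi_t}{\del t}\Bigr)\w dt,
\]
so $\phi$ is closed if and only if $d\psi_t=0$ and $\del\psi_t/\del t=d\om_t$ for each $t$. The first equation is exactly condition (2), by the Remark following Lemma \ref{lem:dec}. For definiteness and the sign of the volume I would invoke Proposition \ref{prop:rewrite}: since $\iota(\del/\del t)\phi=\om_t$ and each $\psi_t$ is definite by hypothesis, $\phi$ is definite with $\vol(\phi)>0$ precisely when the $(1,1)$-part of $\om_t$ with respect to $J_{\psi_t}$ is positive. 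Because $J_{\psi_t}$ pairs the $a^i$ with the $e^i$ (up to the scaling determined by $f$), the Remark after Lemma \ref{lem:dec2} identifies the $(1,1)$-part of $\om_t=\sum_{i,j}K_{ij}a^i\w e^j$ with the form built from the symmetric part of $K_t$, and its positivity becomes positive-definiteness of $\tfrac12(K_{ij}+K_{ji})$, i.e.\ condition (1).

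It remains to derive conditions (3) and (4) from $\del\psi_t/\del t=d\om_t$ under condition (2). The essential point is that both $\del e^i/\del t$ and $\del a^i/\del t$ are \emph{horizontal} --- the first because $e$ is tensorial, the second because the $t$-derivative of a family of connections is tensorial --- so I would grade $3$-forms on $M\times SO(3)$ by their number of $a$-legs and $e$-legs and match the two sides type by type. The purely horizontal component is immediate: using $\del e^{123}/\del t=(\tr K)\,e^{123}$ together with $\hat e^k\w e^j=\delta_{kj}e^{123}$ applied to the curvature part $K_{ij}(da^i)\w e^j$ of $d\om_t$, which contributes $\tr(KG)\,e^{123}$, matching the coefficients of $e^{123}$ gives $\del f/\del t=-f\tr K-\tr(KG)$, which is condition (4). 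For condition (3) I would contract with two fundamental vector fields. Since $\om_t$ is $SO(3)$-invariant, $\cl{L}_{X_i^*}\om_t=0$, so $\iota(X_i^*)d\om_t=-d(\iota(X_i^*)\om_t)=-d\bigl(\textstyle\sum_j K_{ij}e^j\bigr)$ by the definition of $K$ in Lemma \ref{lem:dec2}, while on the other side $\iota(X_j^*)\iota(X_i^*)\psi_t=\sum_k\e_{ijk}e^k$. Equating $\iota(X_j^*)\iota(X_i^*)$ of the two sides, and evaluating the vertical derivatives $X_j^*K_{im}$ algebraically from the $SO(3)$-equivariance of $K$, yields $\del e^i/\del t=\sum_j K_{ij}e^j$, which is condition (3).

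For the converse I would assume conditions (1)--(4) and recover all three properties of $\phi$. Conditions (1) and (2) supply definiteness with positive volume and $d\psi_t=0$ as above, so only the identity $\del\psi_t/\del t=d\om_t$ must be verified. Its purely horizontal component holds by condition (4) and its doubly-vertical component holds by condition (3), reversing the computations just described; what remains is the mixed component with one $a$-leg and two $e$-legs. I expect this to be the main obstacle, as it is the only component in which $\del a^i/\del t$ enters, i.e.\ where the variation of the Levi-Civita connection appears. I would pin down $\del a^i/\del t$ by differentiating the first structure equation $de^i+\sum_{j,k}\e_{ijk}a^j\w e^k=0$ (valid for all $t$ by condition (2)) in $t$: the resulting linear relation expresses the horizontal form $\del a^i/\del t$ through $K$ and its horizontal derivatives, and substituting it should make the mixed component of $\del\psi_t/\del t-d\om_t$ vanish identically. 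A convenient reduction is that $\del\psi_t/\del t-d\om_t$ is automatically closed, since $d\psi_t\equiv 0$ gives $d(\del\psi_t/\del t)=\del(d\psi_t)/\del t=0$; this constrains the remaining component and cuts down the bookkeeping. The principal difficulty throughout is the careful handling of this connection variation together with the $\e$-symbol contractions; the horizontal, doubly-vertical, and top-degree components are comparatively routine.
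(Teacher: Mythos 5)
Your proposal is correct and follows essentially the same route as the paper: split $\tilde d\phi=0$ into $d\psi_t=0$ (the Levi--Civita condition) and $\partial\psi_t/\partial t=d\om_t$, identify definiteness with positivity of the symmetric part of $K_t$, grade both sides by the number of vertical ($a$-)legs, read off conditions (3) and (4) from the doubly-vertical and purely horizontal components, and check that the mixed component is automatic by differentiating the structure equation $de+[a\w e]=0$ in $t$. The only cosmetic differences are that you extract the doubly-vertical component via $\iota(X_j^*)\iota(X_i^*)$ and the Cartan formula where the paper expands directly in the $\hat a^ie^j$, $a^i\hat e^j$, $e^{123}$ basis (citing its references for the resulting identities), and that your derivation of (4) should formally come after (3) since it uses $\tr P=\tr K$.
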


\begin{proof}
We can prove this lemma by direct computation as the proof of (\cite{C}, Proposition 6.8). First, we can easily see that $\phi$ is definite and $\vol(\phi)>0$ if and only if the first condition holds. Thus, all we have to do is to compute $d\phi$. We have
\begin{align*}
    d\phi &= d(\om_t\w dt + \psi_t)\\
    &= (d\om_t - \partial \psi /\partial t)\w dt + d\psi_t, 
\end{align*}
thereby $d\phi=0$ is equivalent to $d\psi_t=0$ and $\partial \psi/\partial t=d\om_t$. By (\cite{C}, Corollary 6.5 (1)), we see that $d\psi_t=0$ if and only if $a_t$ is Levi-Civita for $e_t$. From now on, suppose that $d\psi_t=0.$ Put $\partial e^i /\partial t = P_{ij}e_t^j$ and $\partial a^i /\partial t= Q_{ij}e^j_t$ for $i=1,2,3$. We have 
\begin{align*}
    &\pt{\psi}{t}=-\{\pt{f}{t}+f\tr{P}\}e^{123} +\{\d_{ij}\tr{Q}-Q_{ji}\}a^i\hat{e}^{j} + P_{ij}\hat{a}^ie^j, \\
    &d\om=\tr{(KG)}e^{123}+\e_{j\a\b}K_{i\a;\b}a^i\hat{e}^j + K_{ij}\hat{a}^ie^j,
\end{align*}
where we omit the subscript $t$, denote by $K_{i\a;\b}e^{\b}$ the covariant derivative of $(K_{ij})_t$ for the connection $a_t$,  $\hat{e}^i=(1/2)\e_{ijk}e^{ij}$ and $\hat{a}^i=(1/2)\e_{ijk}a^{jk}$. Also $\delta_{ij}$ is Kronecker's delta. As seen in (\cite{C2}, Lemma 10), the Levi-Civita condition $de_t+[a_t\w e_t]=0$ implies $\delta_{ij}\tr{Q}-Q_{ji}=\e_{j\a\b}K_{i\a;\b}a^i$ for $i,j=1,2,3$. Hence, by comparing the equations above, we obtain the conditions in Lemma \ref{lem:der}.
\end{proof}

\subsection{Results}\label{subsec:res}

Let $\bd{M}$ be the set of all Riemannian metrics on $M$. Using Lemma \ref{lem:dec}, we can define the projection $\pi: \cl{C}^{\inv}(M)\to C^{\infty}_{+}(M)\times \bd{M}$ by $\pi(\psi)=(f,\g)$ for each $\psi \in \cl{C}^{\inv}(M)$, where we take $f$ and $e$ as in Lemma \ref{lem:dec}, and $\g$ is the Riemannian metric on $M$ naturally induced by the solder 1-form $e$. 
Let us define a binary relation on $C^{\infty}_{+}(M)\times\mathbf{M}$. 
Let $(f_1,\g_1),(f_2,\g_2)\in C^{\infty}_{+}(M)\times \bd{M}$. Then $(f_1,\g_1)\prec(f_2,\g_2)$ if there exists a one-parameter family $(f_t,\g_t) \in C^{\infty}_{+}(M)\times \bd{M}$ parameterized by $t\in[t_1,t_2]$ satisfying the following conditions:
\begin{enumerate}
    \item $(f_{t_1},\g_{t_1})=(f_1,\g_1)$ and $(f_{t_2},\g_{t_2})=(f_2,\g_2)$,
    \item $\partial \g/ \partial t$ is positive-definite covariant symmetric tensor for each $t$,
    \item $\partial f / \partial t= - A(f,\g)^{ij}\g_{ij}$ for each $t$. Here $A(f,\g)^{ij}=1/2(G_t^{ij}+f\g_t^{ij})$ and $(G_{ij})_t$ is the Einstein tensor for each metric $\g_t$.
\end{enumerate}

\begin{theorem}\label{thm:1}
Let $\psi_1,\psi_2\in\cl{C}^{\inv}(M)$. Then $\psi_1\prec^{\inv}\psi_2$ implies $\pi(\psi_1)\prec\pi(\psi_2)$.
\end{theorem}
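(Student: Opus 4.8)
The plan is to produce the path witnessing $\pi(\psi_1)\prec\pi(\psi_2)$ as the $\pi$-image of the slices of the cobounding $G_2$-form, and to convert the closedness and definiteness of that form into the two evolution conditions defining $\prec$ on $C^{\infty}_{+}(M)\times\bd{M}$ by chaining Lemmas \ref{lem:dec}, \ref{lem:dec2} and \ref{lem:der}.

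First I would unpack the hypothesis. Suppose $\psi_1\prec^{\inv}\psi_2$ and let $\phi$ be a closed definite $3$-form on $M\times SO(3)\times[t_1,t_2]$ as in Definition \ref{def:invbinary}. Exactly as in Proposition \ref{prop:rewrite}, set $\psi_t:=\phi|_{M\times SO(3)\times\{t\}}$ and $\om_t:=\iota(\del/\del t)\phi|_{M\times SO(3)\times\{t\}}$, so $\phi=\om_t\w dt+\psi_t$. Since $\phi$ is $SO(3)$-invariant and vanishes along the fibres, each $\psi_t$ is an $SO(3)$-invariant definite $3$-form vanishing along the fibres, so Lemma \ref{lem:dec} gives a family $(f_t,a_t,e_t)$, and $d\psi_t=0$ forces $a_t$ to be the Levi-Civita connection of $e_t$. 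I then set $(f_t,\g_t):=\pi(\psi_t)$ with $\g_t=\sum_i e_t^i\otimes e_t^i$; the boundary conditions $\phi|_{t_i}=\psi_i$ give condition (1) for $\prec$ at once.

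The main work is to put $\om_t$ into the shape demanded by Lemma \ref{lem:der}, namely $\om_t=\sum_{i,j}(K_{ij})_t\,a_t^i\w e_t^j$. That $\om_t$ has no $a\w a$ component is precisely the co-associativity of the fibres: vanishing of $\phi$ along $SO(3)\times[t_1,t_2]$ annihilates every $dt\w a\w a$ term, hence the vertical part of $\om_t$. The remaining point, which I expect to be the crux, is to remove the horizontal $e\w e$ component of $\iota(\del/\del t)\phi$. A direct computation shows this component cannot simply be discarded: writing it as a horizontal $2$-form $\beta_t$ pulled back from $M$, the $e^{123}$-part of $d\beta_t$ equals a divergence term, which therefore pollutes the evolution of $f_t$ and breaks condition (4) of Lemma \ref{lem:der}. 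Since $\prec^{\inv}$ only requires the existence of \emph{some} cobounding $\phi$, I would exploit this freedom to normalise it away — either by pulling $\phi$ back under a fibre-preserving automorphism of $M\times SO(3)\times[t_1,t_2]$ that is the identity on the two ends and kills the horizontal part of $\iota(\del/\del t)\phi$ while leaving each $\psi_t$ (hence $(f_t,\g_t)$ and the endpoints) intact, or by instead modifying the metric path $\g_t$ within its class so that the induced $f$-equation still lands on $f_2$. Checking that such a normalisation exists without destroying definiteness, the sign $\vol(\phi)>0$, or the fibrewise vanishing is the heart of the argument.

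Once $\om_t=(K_{ij})_t\,a^i\w e^j$ is achieved, Lemma \ref{lem:der} supplies the four conditions on $(f_t,a_t,e_t,(K_{ij})_t)$ and the translation is routine. Condition (3) there, $\del e^i/\del t=\sum_j(K_{ij})_t e^j$, yields $\del\g_{ij}/\del t=(K_{ij}+K_{ji})_t$ in the coframe $e_t$, which is positive-definite symmetric by condition (1); this is condition (2) for $\prec$. For the evolution of $f_t$ I would invoke the standard three-dimensional identity that the Levi-Civita curvature, written as in the text via $da+\tfrac12[a\w a]=G_{ij}\hat{e}^iX_j$, is the Einstein tensor of $\g_t$ (cf. \cite{C}, \cite{C2}); substituting $\del\g_{ij}/\del t=(K+K^{T})_{ij}$ and the symmetry of $G_t$ gives $-A(f,\g)^{ij}(\del\g_{ij}/\del t)=-\tfrac12(G^{ij}+f\g^{ij})(K+K^{T})_{ij}=-f_t\tr{K_t}-\tr{(K_tG_t)}$, which is exactly condition (4) of Lemma \ref{lem:der}. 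Hence $(f_t,\g_t)$ satisfies condition (3) for $\prec$, completing the verification.
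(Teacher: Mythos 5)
Your overall route is the paper's: decompose $\phi=\om_t\w dt+\psi_t$, apply Lemmas \ref{lem:dec} and \ref{lem:dec2} to obtain $(f_t,a_t,e_t,(K_{ij})_t)$, feed this into Lemma \ref{lem:der}, and translate its conditions (1)--(4) into the three conditions defining $\prec$ on $C^{\infty}_{+}(M)\times\bd{M}$; your final translation (including the identity $-\tfrac12(G^{ij}+f\g^{ij})(K+K^{T})_{ij}=-f\tr{K}-\tr{(KG)}$ in the coframe $e_t$) is correct and in fact more explicit than the paper's. The divergence is in the middle step. The paper disposes of it in one clause --- ``by the co-associative condition we can apply Lemma \ref{lem:dec2} to $\om_t$'' --- i.e.\ it treats the vanishing of $\om_t$ on both the fibres and the horizontal planes as delivered by Definition \ref{def:invbinary}. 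You accept the first part (correctly: $\phi|_{SO(3)\times[t_1,t_2]}=0$ kills exactly the $a\w a$ component of $\om_t$ together with the $a^{123}$ component of $\psi_t$), but you flag the horizontal $e\w e$ component as not obviously absent, and you then leave its removal as an explicitly unexecuted ``normalisation'' which you yourself call the heart of the argument. As submitted that is a gap: without $\om_t=\sum K_{ij}a^i\w e^j$ you cannot invoke Lemma \ref{lem:der}, and, as you observe, a horizontal component $B_i\hat{e}^i$ contributes a divergence term to the $e^{123}$ part of $d\om_t$ and hence corrupts the evolution equation for $f_t$.

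Moreover, of your two proposed fixes, the first does not work as described: to cancel $B_i\hat{e}^i$ by replacing $\del/\del t$ with $\del/\del t+Z$ for a horizontal $Z$ you are forced to take $Z^i=B_i/f$, and then $\iota(Z)\psi_t=-fZ^i\hat{e}^i+Z^i\hat{a}^i$ reintroduces a vertical--vertical ($a\w a$) component of the same magnitude, i.e.\ it destroys the co-associativity of the fibres in the new product structure; the $e\w e$ part cannot be gauged away while keeping the fibres co-associative. The second proposal (re-routing the path $\g_t$ so that the modified $f$-equation still reaches $f_2$) is a nontrivial controllability assertion that you do not argue. So you must either prove the claim the paper takes for granted --- that for a closed, definite, $SO(3)$-invariant $\phi$ with co-associative fibres the slice form $\iota(\del/\del t)\phi$ has no $e\w e$ component --- or genuinely carry out a normalisation. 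Until one of these is done, your proof is incomplete precisely at the point where the paper's own proof is tersest.
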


\begin{proof}
Suppose that $\psi_1\prec^{\inv}\psi_2.$ If we write a 3-form $\phi$ satisfying the conditions in Definition \ref{def:invbinary} as $\om_t\w dt + \psi_t$, then applying Lemma \ref{lem:dec} to $\psi_t$ gives the decomposition $(f_t,a_t)$, by the co-associative condition in Definition \ref{def:invbinary} we can apply Lemma \ref{lem:dec2} to $\om_t$ to obtain the decomposition $(f_t,a_t,e_t,K_t)$ of $(\om_t,\psi_t)$, and then apply Lemma \ref{lem:der} to deduce $\pi(\psi_1)\prec\pi(\psi_2)$. 
\end{proof}

\begin{theorem} \label{thm:2}
The relation $\prec^{\inv}$ on $\cl{C}^{\inv}(M)$ is irreflexive, that is, $\psi \nprec^{\inv} \psi$ for any $\psi\in \cl{C}^{\inv}(M)$.
\end{theorem}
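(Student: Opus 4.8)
The plan is to show that $\psi \prec^{\inv} \psi$ is impossible by exploiting the volume-monotonicity hidden in the conditions of Lemma \ref{lem:der}. By Theorem \ref{thm:1}, any putative cobordism $\psi \prec^{\inv} \psi$ descends to a path $(f_t, \g_t)$ with $\pi(\psi) = (f_{t_1}, \g_{t_1}) = (f_{t_2}, \g_{t_2}) = \pi(\psi)$, and this path satisfies conditions (1)--(3) of the relation $\prec$ on $C^{\infty}_{+}(M) \times \bd{M}$. The key structural fact is condition (2): $\partial \g / \partial t$ is positive-definite at every $t$. First I would show that this forces a strict monotonicity on some scalar invariant built from $\g_t$ that must \emph{return to its starting value} at $t = t_2$ — which is a contradiction. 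The natural candidate is the total Riemannian volume $\vol(\g_t) = \int_M dV_{\g_t}$, or a suitably weighted version involving $f_t$.

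The main computation is to differentiate this functional along the flow. Since $\partial_t (dV_{\g_t}) = \tfrac12 \g^{ij} (\partial_t \g_{ij})\, dV_{\g_t}$ and $\partial_t \g$ is positive-definite, the naive volume $\vol(\g_t)$ is \emph{strictly increasing}, already giving $\vol(\g_{t_2}) > \vol(\g_{t_1})$, contradicting periodicity — \emph{provided} $M$ is closed so the integral is finite. If one must accommodate the function $f_t$ and condition (3), the cleaner invariant is the weighted volume $I(t) = \int_M f_t\, dV_{\g_t}$; here I would substitute condition (3), $\partial_t f = -A(f,\g)^{ij}\g_{ij}$ with $A^{ij} = \tfrac12(G_t^{ij} + f\g_t^{ij})$, and combine it with the volume derivative. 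The term $f \g^{ij} A_{ij}$ will expand using $A^{ij}\g_{ij} = \tfrac12(\tr_\g G + 3f)$, where $\tr_\g G$ is the trace of the Einstein tensor, equal to $-\tfrac12 \mathrm{Scal}_\g$ in dimension $3$. The plan is to show the resulting integrand has a definite sign, so that $I(t)$ is strictly monotone and cannot satisfy $I(t_1) = I(t_2)$.

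\textbf{The main obstacle} is that condition (3) couples $f$ to the Einstein tensor with \emph{indefinite} sign: the Einstein-tensor contribution $G^{ij}\g_{ij}$ need not have a fixed sign on a general metric, so $\partial_t f$ is not obviously signed, and $I(t)$ might not be monotone on its own. The resolution I anticipate is to use the positivity of $\partial_t \g$ more forcefully: since $\partial_t \g$ is positive-definite, condition (3) can be rewritten in terms of $\partial_t \g$ and integrated by parts against the Bianchi identity $\nabla_i G^{ij} = 0$ for the Einstein tensor. Integrating $\int_M \tr_\g(G_t \cdot \partial_t \g)\, dV$ by parts should kill the curvature terms (or convert them into manifestly positive quantities via the contracted second Bianchi identity), leaving only the positive-definite contribution from $\partial_t \g$ and the $f\g^{ij}\g_{ij} = 3f > 0$ term. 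I would therefore aim to establish a clean identity of the form $\tfrac{d}{dt} I(t) = \int_M (\text{strictly positive integrand})\, dV > 0$, directly forcing $I(t_2) > I(t_1)$ and contradicting the equality $\pi(\psi_1) = \pi(\psi_2)$. Closing this sign argument against the possibly-indefinite Einstein term is where the real work lies.
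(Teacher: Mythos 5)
Your core idea is exactly the paper's: the positive-definiteness coming from Lemma \ref{lem:der}(1) forces the Riemannian volume of $\g_t$ to be strictly increasing, so the path cannot close up, and the second half of your proposal (the weighted functional $I(t)$, the Einstein tensor, the Bianchi identity) is an unnecessary detour --- condition (2) alone already yields the contradiction, and no sign analysis of condition (3) is needed. The one genuine gap is your reliance on the \emph{integrated} volume $\int_M dV_{\g_t}$: the paper takes $M$ to be merely an oriented 3-manifold, not closed, so this integral may be infinite and your monotonicity argument as written does not apply. The paper avoids this by arguing pointwise: from Lemma \ref{lem:der}(3) one gets $\partial(e^1_t\w e^2_t\w e^3_t)/\partial t=\tr(K_t)\,e^1_t\w e^2_t\w e^3_t$ with $\tr(K_t)>0$ by condition (1), so the volume \emph{form} itself is strictly increasing at every point of $M$, whence $e_{t_1}^{123}\neq e_{t_2}^{123}$ and $\psi_{t_1}\neq\psi_{t_2}$. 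Since $\partial_t\g_{ij}$ is essentially the symmetrization of $K_{ij}$ and $\tfrac12\g^{ij}\partial_t\g_{ij}=\tr K_t$, your computation is the same one in disguise; you only need to drop the integration over $M$ and state the conclusion for the volume form (or density) pointwise to recover the theorem in the stated generality.
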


\begin{proof} From Lemma \ref{lem:der}, we can see that
if $\psi_1\prec^{\inv}\psi_2$ then we have $\partial (e^{1}_t\w e^{2}_t\w e^3_{t}) /\partial t = \tr{K_t}e^{1}_t\w e^{2}_t\w e^3_{t}$ and $\tr{K_t}>0$. Hence $\psi_1\neq\psi_2$.
\end{proof}

\begin{remark}
If an oriented 3-manifold $M$ is closed,  then $M\times SO(3)$ admits no symplectic structure. We do not know whether the non-existence of symplectic structures induces irreflexivility of $\prec$ in general.
\end{remark}

Let $\cl{G}$ be the gauge group of the trivial principal bundle $M\times SO(3)$.

\begin{proposition}\label{prop:lift}
Let $\psi_1,\psi_2\in\cl{C}^{\inv}(M)$. If $\pi(\psi_1)\prec\pi(\psi_2)$, then there exists $\tau\in \cl{G}$ such that $\psi_1\ll^{\inv}\tau^*\psi_2$.
\end{proposition}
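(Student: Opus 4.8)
The plan is to invert the chain of reductions in Lemmas \ref{lem:dec}, \ref{lem:dec2} and \ref{lem:der}. Starting from a base path that realizes $\pi(\psi_1)\prec\pi(\psi_2)$, I would reconstruct a one-parameter family $(f_t,a_t,e_t,K_t)$ satisfying the four conditions of Lemma \ref{lem:der}, with each $K_t$ \emph{symmetric}; by the remark after Lemma \ref{lem:dec2} this makes $\om_t=K_{ij}a^i\w e^j$ a $(1,1)$-form, so the reconstructed $\phi=\om_t\w dt+\psi_t$ satisfies $\iota(\partial/\partial t)\phi\w\phi=0$ and gives a relation of type $\ll^{\inv}$. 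The role of the gauge transformation is built in from the start: $\pi$ records only $(f,\g)$ and forgets the rotational ($O(3)$) freedom in the choice of solder form for a fixed metric, and an element $\tau\in\cl{G}$ acts on $e$ by a pointwise rotation while fixing $f$ and $\g$, so $\pi(\tau^*\psi_2)=\pi(\psi_2)$ and $\tau$ is exactly what absorbs the residual coframe ambiguity at the endpoint.

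Fix a base path $(f_t,\g_t)$, $t\in[t_1,t_2]$, realizing $\pi(\psi_1)\prec\pi(\psi_2)$, and let $e_1$ be the solder form attached to $\psi_1$ by Lemma \ref{lem:dec}. Seeking a lift of the form $e_t=L_t e_1$ for a $GL(3;\bb{R})$-valued function $L_t$ on $M$ with $L_{t_1}=I$, the condition $\g_t=\sum_i e^i_t\otimes e^i_t$ becomes $L_t^{\top}L_t=g_t$, where $g_t$ is the matrix of $\g_t$ in the frame $e_1$, and $\partial e^i/\partial t=K_{ij}e^j$ becomes $K_t=(\partial L_t/\partial t)L_t^{-1}$. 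Imposing symmetry of $K_t$ forces the first-order ODE
\[
\frac{\partial L_t}{\partial t}=\tfrac12\,(L_t^{\top})^{-1}\,\frac{\partial g_t}{\partial t},\qquad L_{t_1}=I,
\]
which is solved pointwise on $M$ with smooth dependence on the base point. One checks $\partial(L_t^{\top}L_t)/\partial t=\partial g_t/\partial t$, so $L_t^{\top}L_t=g_t$ for all $t$ and $e_t$ genuinely realizes $\g_t$; moreover $K_t=\tfrac12(L_t^{-1})^{\top}(\partial g_t/\partial t)L_t^{-1}$ is symmetric and, being a congruence transform of $\tfrac12\,\partial\g_t/\partial t$, is positive-definite precisely because $\partial\g/\partial t$ is. This is the canonical ``no-spin'' lift of the metric path.

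With $e_t$ fixed, take $a_t$ to be its Levi-Civita connection and $K_t$ as above. Conditions (2) and (3) of Lemma \ref{lem:der} then hold by construction and condition (1) is the positivity just established. For condition (4), since $K_t=\tfrac12\,\partial\g_t/\partial t$ is symmetric and $G_t$ is the symmetric Einstein tensor, one has $-f_t\tr K_t-\tr(K_tG_t)=-A(f_t,\g_t)^{ij}(\partial\g_t/\partial t)_{ij}$, which is exactly the evolution law (3) defining $\prec$ on $C^{\infty}_{+}(M)\times\bd{M}$; hence the $f_t$ determined by condition (4) and the initial value $f_1$ agrees with the given base path, and in particular $f_{t_2}=f_2$. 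By Lemma \ref{lem:der}, $\phi=\om_t\w dt+\psi_t$ is then closed, definite and of positive volume, and the symmetry of $K_t$ gives $\iota(\partial/\partial t)\phi\w\phi=0$, so $\psi_1\ll^{\inv}\psi_{t_2}$.

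Finally I would identify the endpoint. By the remark after Lemma \ref{lem:dec}, the terminal form $\psi_{t_2}$ is determined by $(f_2,e_{t_2})$, while $\psi_2$ is determined by $(f_2,\tilde e_2)$ for its own solder form $\tilde e_2$. Both $e_{t_2}$ and $\tilde e_2$ are positively oriented orthonormal coframes for $\g_2$, hence differ by a unique smooth $\tau\colon M\to SO(3)$, an element of $\cl{G}$, and unwinding the $\cl{G}$-action yields $\psi_{t_2}=\tau^*\psi_2$; therefore $\psi_1\ll^{\inv}\tau^*\psi_2$. I expect the main obstacle to be the global construction of the no-spin lift together with the bookkeeping that the leftover coframe freedom at $t_2$ is genuinely a gauge transformation; in particular one must fix the conventions for the action of $\cl{G}$ on $(f,a,e)$ precisely enough that the pointwise rotation relating $e_{t_2}$ and $\tilde e_2$ produces the literal equality $\psi_{t_2}=\tau^*\psi_2$, not merely equality of the induced data $(f_2,\g_2)$.
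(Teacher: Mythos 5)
Your proof is correct and follows essentially the same route as the paper: the paper lifts the path $(f_t,\g_t)$ horizontally with respect to the connection on the bundle of solder forms over $\bd{M}$ given by the splitting $M(3;\bb{R})=\rm{Sym}(3;\bb{R})\oplus\rm{Ant}(3;\bb{R})$, and your ODE for $L_t$ is exactly that horizontal lift written out explicitly (it is what forces $K_t$ symmetric, hence $\om_t$ of type $(1,1)$), followed by the same endpoint identification via a gauge transformation $\tau\in\cl{G}$. The only difference is that you solve the lift by hand rather than invoking the connection from the cited reference.
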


\begin{proof}
Suppose that $\pi(\psi_1)\prec\pi(\psi_2)$. Then we have a one-parameter family $(f_t,\g_t)$ parameterized by $[t_1,t_2]$ that gives the relation $\pi(\psi_1)\prec\pi(\psi_2)$. 
We can lift this curve $(f_t,\g_t)$ by the natural connection in the principal $\cl{G}$-bundle $\cl{M}\to \mathbf{M}$ used in (\cite{C2}, Section 1.1), where $\cl{M}$ denotes a connected component of the space of solder 1-forms on $M\times SO(3)$. Note that the set of solder 1-forms has two connected component isomorphic to each other. Here, we choose the one containing the solder 1-form given by $\psi_1$ as in Lemma \ref{lem:dec}.
 The connection in $\cl{M}\to \mathbf{M}$ is defined by the decomposition $M(3;\bb{R})=\rm{Sym}(3;\bb{R})\oplus\rm{Ant}(3;\bb{R})$ at each $T_{e}\cl{M}$, where $\rm{Sym}(3;\bb{R})$ and $\rm{Ant}(3;\bb{R})$ are the symmetric and anti-symmetric matrices. Let $(f_t,e_t)$ be a horizontally lifted curve in $C^{\infty}_{+}\times\cl{M}$, and define $(T_{ij})_t$ by $\partial e^i /\partial t = (T_{ij})_te^j_t$ for each $t$. Here, by the definition of the connection in $\cl{M}$, $(T_{ij})_t$ is an $SO(3)$-equivariant $\rm{Sym}(3)$-valued funcion on $M\times SO(3)$ for each $t$. Then a pair of $\psi_t=-f_te^{123}_t+e^1_ta^{23}_t+e^2_ta^{31}_t+e^3_ta^{12}_t$ and $\om_t=(T_{ij})_ta^i_t\w e^j_t$ gives $\psi_1\ll^{\inv} \psi_{t_2}$, where $a_t$ is the Levi-Civita connection for each $e_t$. Since $\pi(\psi_{t_2})=\psi_2$, there exists a unique $\tau\in\cl{G}$ such that $\psi_{t_2}=\tau^*\psi_2$.
\end{proof}

By Theorem \ref{thm:1} and Proposition \ref{prop:lift} combined, we have 

\begin{theorem}\label{thm:3}
Let $\psi_1,\psi_2\in \cl{C}^{\inv}(M)$. If $\psi_1\prec^{\inv}\psi_2$,  then there exists $\tau\in \cl{G}$ such that $\psi_1\ll^{\inv}\tau^*\psi_2$.
\end{theorem}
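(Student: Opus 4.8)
The plan is to chain the two preceding results through the projection $\pi$. Starting from the hypothesis $\psi_1\prec^{\inv}\psi_2$, I would first invoke Theorem \ref{thm:1}, which at once yields $\pi(\psi_1)\prec\pi(\psi_2)$ in $C^{\infty}_+(M)\times\bd{M}$; this descends the relation from the total space $M\times SO(3)$ to the base $3$-manifold, retaining only the pair $(f,\g)$. The second step is to feed $\pi(\psi_1)\prec\pi(\psi_2)$ into Proposition \ref{prop:lift}, which returns a gauge transformation $\tau\in\cl{G}$ with $\psi_1\ll^{\inv}\tau^*\psi_2$. This is exactly the asserted conclusion, so formally the argument is the two-step composition of Theorem \ref{thm:1} followed by Proposition \ref{prop:lift}.

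The reason this composition upgrades $\prec^{\inv}$ to the rigid relation $\ll^{\inv}$ is that the descent in Theorem \ref{thm:1} forgets the anti-symmetric part of the endomorphism $K_t$ of Lemma \ref{lem:dec2}: the base relation $\prec$ constrains only the symmetric part $(1/2)(K_{ij}+K_{ji})$ through its positive-definiteness, together with the induced metric evolution $\partial e^i/\partial t=\sum_j(K_{ij})_te^j_t$. Proposition \ref{prop:lift} then rebuilds a family on $M\times SO(3)$ using the horizontal lift for the natural connection on $\cl{M}\to\bd{M}$, whose defining feature is the splitting $M(3;\bb{R})=\rm{Sym}(3;\bb{R})\oplus\rm{Ant}(3;\bb{R})$. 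The horizontal lift forces the corresponding endomorphism $T_t$ to be symmetric, so that $\om_t=(T_{ij})_ta^i_t\w e^j_t$ is of type $(1,1)$ for $J_{\psi_t}$ by the remark after Lemma \ref{lem:dec2}. This $(1,1)$ property is precisely the co-orthogonality condition $\iota(\del/\del t)\phi\w\phi=0$ defining $\ll^{\inv}$.

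The point requiring the most care is where the horizontal lift lands. The lifted endpoint $\psi_{t_2}$ need not coincide with $\psi_2$ on the nose; it only agrees with $\psi_2$ after applying $\pi$, since the lift is carried out in $\cl{M}$ and reproduces the prescribed curve $(f_t,\g_t)$ only at the level of induced metrics. As recorded in the remark following Lemma \ref{lem:dec}, a closed $SO(3)$-invariant definite $3$-form in $\cl{C}^{\inv}(M)$ is determined by $(f,\g)$ up to the action of $\cl{G}$; hence any two such forms sharing the same $\pi$-image differ by a unique gauge transformation. Applying this to $\psi_{t_2}$ and $\psi_2$ produces the unique $\tau\in\cl{G}$ with $\psi_{t_2}=\tau^*\psi_2$, which is the $\tau$ of the statement. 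Thus, once the lifting geometry of Proposition \ref{prop:lift} is granted, Theorem \ref{thm:3} is a direct combination; the only genuine content---that the symmetric horizontal lift realizes the $(1,1)$ condition---is already subsumed in Proposition \ref{prop:lift}, so the residual obstacle here is merely the bookkeeping of the gauge transformation absorbing the discrepancy between the lifted endpoint and $\psi_2$.
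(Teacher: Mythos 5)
Your proposal is correct and matches the paper exactly: the paper derives Theorem \ref{thm:3} as the immediate composition of Theorem \ref{thm:1} with Proposition \ref{prop:lift}, which is precisely your two-step argument. Your additional commentary on the symmetric horizontal lift and the gauge discrepancy accurately recapitulates the content already contained in the proof of Proposition \ref{prop:lift}, so nothing further is needed.
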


Theorem \ref{thm:3} is related to Question \ref{conj:2}. Moreover, considering how close we can take $\tau^*\psi_2$ to $\psi_2$ in Theorem  \ref{thm:3} seems to be related to the isoholonomic problem (see e.g. \cite{M, M2}), which is a typical problem of sub-Riemannian geometry, for the infinite-dimensional principal $\cl{G}$-bundle $\cl{M} \to \mathbf{M}$ over the space $\bd{M}$ of all Riemannian metrics on the 3-manifold $M$.

\section*{Acknowledgments}

The author thanks N.\ Kawazumi for many comments on the manuscript, R.\ Goto, H.\ Kasuya and H.\ Konno for discussing with him, and the referee for constructive comments. He is grateful to participants and organizers of East Asian Symplectic Conference 2019 for giving many comments on his talk. 
This work was supported by the Leading graduate course for Frontiers of Mathematical Sciences and Physics.

\bibliographystyle{amsplain}
\bibliography{paper3}

\providecommand{\bysame}{\leavevmode\hbox to3em{\hrulefill}\thinspace}
\providecommand{\MR}{\relax\ifhmode\unskip\space\fi MR }
\providecommand{\MRhref}[2]{%
  \href{http://www.ams.org/mathscinet-getitem?mr=#1}{#2}
}
\providecommand{\href}[2]{#2}
\begin{thebibliography}{1}

\bibitem{C2}
R.~Chihara, \emph{{$G_2$-manifolds and the ADM formalism}}, Differential Geom.
  Appl. \textbf{66} (2019), 61--74.

\bibitem{C}
\bysame, \emph{{$G_2$-metrics arising from special Lagrangian fibrations}},
  Complex Manifolds \textbf{6} (2019), no.~1, 348--365.

\bibitem{D}
S.~Donaldson, \emph{{Remarks on $G_2$-manifolds with boundary}}, Surv. Differ.
  Geom. \textbf{22} (2018), 103--124.

\bibitem{H}
N.~Hitchin, \emph{{The geometry of three-forms in six dimensions}}, J.
  Differential Geom. \textbf{55} (2000), no.~3, 547--576.

\bibitem{H2}
\bysame, \emph{{Stable forms and special metrics}}, Contemp. Math. \textbf{288}
  (2001), 70--89.

\bibitem{J}
D.~Joyce, \emph{{Compact manifolds with special holonomy}}, Oxford University
  Press on Demand, 2000.

\bibitem{M}
R.~Montgomery, \emph{{Isoholonomic problems and some applications}}, Comm.
  Math. Phys. \textbf{8} (1990), no.~3, 565--592.

\bibitem{M2}
\bysame, \emph{{A tour of sub-Riemannian geometries, their geodesics and
  applications}}, no.~91, American Mathematical Soc., 2002.

\end{thebibliography}

\end{document}